\newcommand{\fk}{\mathfrak}
\newcommand{\under}{\underline}
\newcommand{\lrarrow}{\longrightarrow}
\newcommand{\rarrow}{\rightarrow}
\newcommand{\xrarr}{\xrightarrow}
\newcommand{\onto}{\twoheadrightarrow}
\newcommand{\into}{\hookrightarrow}
\newcommand{\bul}{\bullet}
\newcommand{\tld}{\tilde}
\newcommand{\wtld}{\widetilde}
\newcommand{\RNum}[1]{\uppercase\expandafter{\romannumeral #1\relax}}
\newcommand{\cid}{\operatorname{cid}}
\newcommand{\di}{\operatorname{dim}}
\DeclareMathOperator{\Tor}{Tor} 
\DeclareMathOperator{\Ima}{Im} 
\DeclareMathOperator{\Ann}{Ann}
\newtheorem{thm}{Theorem}
\newtheorem{cor}[thm]{Corollary}
\newtheorem{lem}[thm]{Lemma}
\newtheorem{prop}[thm]{Proposition}
\newtheorem{defin}[thm]{Definition}
\newtheorem{conj}[thm]{Conjecture}
\theoremstyle{remark}
\newtheorem{rem}[thm]{Remark}
\newtheorem{disc}[thm]{Discussion}
\begin{document}

\title{ the number of generators of the first Koszul homology of an Artinian ring }
\author{Alex Zhongyi Zhang}
\address{University of Michigan, Ann Arbor, MI 48109, USA}
\email{zhongyiz@umich.edu}

\begin{abstract}
We study the conjecture that if $I,J$ are $\mu-$primary in a regular local ring $(R,\mu)$ with $\di(R)=n$, then $\frac{I \cap J}{IJ} \cong \Tor_1(R/I, R/J)$ needs at least $n$ generators, and a related conjecture about the number of generators of the first Koszul homology module of an Artinian local ring $(A,m)$. In this manuscript, we focus our attention on the complete intersection defect of the Artinian ring and its quotient by the Koszul elements. We prove that the number of generators of the first Koszul homology module of $x_1,...,x_n \in m$ on an Artinian local ring $(A,m)$ is at least $n+ \cid(A)- \cid(\frac{A}{(x_1,...,x_n)A})$, where $\cid A$ denotes the complete intersection defect of the Artinian local ring $A$.  
\end{abstract}

\maketitle
%\paragraph{Introduction}
\section{Introduction}
 We study the conjecture that if $(R,m)$ is a regular local ring, $I+J$ is $m$-primary, and $R/I,R/J$ are Cohen-Macaulay, then $\Tor_1(R/I, R/J)$ needs at least $\text{ht}(I) +\text{ht}(J)-n$ generators. The conjecture follows in the equal-characteristic case from the conjecture that the number of generators of $H_1(x_1,...,x_n;A)$ is at least $n$, where $(A,m)$ is an Artinian local ring and $x_1,..,x_n \in m$.

First we recall several concepts in commutative algebra.

\begin{defin}[complete intersection defect]
If $A$ is a local ring and $M$ is a finitely generated $A$-module, let $\nu(M)$ denote the least number of generators of $M$. If $A=T/I$, where $T$ is regular local. Then the complete intersection defect of $A$, denoted by $\cid(A)$, is defined as $\nu(I) -\text{ht}(I)$.
\end{defin}

\begin{rem}
This notion is well defined: see [A].
\end{rem}
\begin{rem}
Notice that if $A$ is a complete intersection, namely a regular local ring modulo a regular sequence, then $\cid(A)=0$. Also, $\cid(A) \geq 0$ for any local ring $A$, since for any ideal $I$, $\text{ht}(I)\leq \nu(I)$ by Krull's height theorem.
\end{rem}

We use $T_{\bul}$ for the total complex of a double or multiple complex.
\begin{defin}[Koszul Complex and Koszul homology]
   Given a ring $A$ and $x_1,...x_n \in A$, for an $A-$module $M$ the Koszul complex of $M$ with respect to $x_1,...,x_n$, denoted by $K_{\bullet}(x_1,...,x_n; M)$, is defined as follows. If $n=1$, $K_{\bullet}(x_1;M)$ is the complex $0\rarrow M \overset{x_1}{\rarrow}M \rarrow 0$, where the left hand copy of M is in degree 1 and the right hand copy in degree $0$. Recursively, for $n>1$, 
   \[    K_{\bullet}(x_1,...x_n; M)=T_{\bullet}(K_{\bullet}(x_1,...x_{n-1}; A)\otimes_{A} K_{\bullet}
      (X_n;M))
   \]
   %where $T_{\bullet}(C_{\bullet} \otimes B_{\bullet})$ denotes the total complex of the tensor product $C_{\bullet} \otimes B_{\bullet}$.\\
   Said differently, 
   \[K_{\bullet}(x_1,...x_n;M)=T_{\bullet}( K_{\bullet}(x_1;A) \otimes_{A} \ldots 
   \otimes_{A} K_{\bullet}(x_n;A) \otimes_A M)\]
    The $i$th homology of the complex is the $i$th Koszul homology, denoted as $H_{i}(x_1,...,x_n; M)$ or $H_{i}(\under{x};M)$.
   
\end{defin}

%The complex looks as follows
%\[ 0 \lrarrow M \overset{d_n} {\lrarrow} M^{\binom{n}{n-1}}  \overset{d_{n-1}} {\lrarrow}\cdots \overset{d_2} {\lrarrow} M^{\binom{n}{1}} \overset{d_1} {\lrarrow} M \lrarrow 0
%\]
%Where  we assume on the ith spot, $M^{\binom{n}{i}}$ is generated by $m\otimes u_{j_1,...,j_i}$, where $1\leq j_1< j_2 \cdots < j_i \leq n$ and $m\in M$.
 
 %$d_i : M^{\binom{n}{i}} \lrarrow M^{\binom {n}{i-1}} $ is defined as
 %\[ d_i(u_{j_1,...,j_i})= \Sigma_{t=1}^{i}(-1)^{t-1}x_{j_t}u_{{j_1},...j_{t-1},j_{t+1},...,j_i}
 %\]

\paragraph{}
We give several properties of the Koszul complex.

\begin{prop}
 An $A$-linear map $f : M \lrarrow N $ induces, in a covariantly functorial way, a map of Koszul complexes $K_{\bullet}(\under{x};M)\lrarrow K_{\bullet}(\under{x};N)$, and, hence, a map of Koszul homology $H_{\bullet}(\under{x}; M) \lrarrow H_{\bullet}(\under{x};N).$ If $M=N$ and the map is multiplication by $a \in A$, the induced map on Koszul complexes and on their homology is also given by multiplication by the element $a$.
  \end{prop}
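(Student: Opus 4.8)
The plan is to exploit the fact that, by the ``said differently'' form of the definition, the module $M$ enters the Koszul complex only as a single rightmost tensor factor: writing $K_\bullet(\underline{x};A)=T_\bullet(K_\bullet(x_1;A)\otimes_A\cdots\otimes_A K_\bullet(x_n;A))$ for the complex of free modules built from the elements alone, one obtains a canonical identification $K_\bullet(\underline{x};M)\cong K_\bullet(\underline{x};A)\otimes_A M$ in each homological degree, compatible with the differentials. Under this identification the differential of $K_\bullet(\underline{x};M)$ is $d_{\underline{x}}\otimes \mathrm{id}_M$, where $d_{\underline{x}}$ is the differential of $K_\bullet(\underline{x};A)$; in particular the differential does not involve $M$ at all.

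Granting this, first I would define the induced map to be $K_\bullet(f):=\mathrm{id}_{K_\bullet(\underline{x};A)}\otimes f$ in each degree. That this is a chain map is immediate: $(d_{\underline{x}}\otimes\mathrm{id}_N)\circ(\mathrm{id}\otimes f)=d_{\underline{x}}\otimes f=(\mathrm{id}\otimes f)\circ(d_{\underline{x}}\otimes\mathrm{id}_M)$, using only that $f$ is $A$-linear, so it commutes with the scalar entries of $d_{\underline{x}}$. Covariant functoriality then reduces to functoriality of $-\otimes_A M$: since $\mathrm{id}\otimes\mathrm{id}_M$ is the identity and $\mathrm{id}\otimes(g\circ f)=(\mathrm{id}\otimes g)\circ(\mathrm{id}\otimes f)$, we get $K_\bullet(\mathrm{id}_M)=\mathrm{id}$ and $K_\bullet(g\circ f)=K_\bullet(g)\circ K_\bullet(f)$. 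Passing to homology is then automatic, because a chain map induces a map on homology and $H_\bullet$ is itself a functor, so compositions and identities are preserved.

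For the final assertion, suppose $M=N$ and $f=a\cdot\mathrm{id}_M$ for $a\in A$. Then $K_\bullet(f)=\mathrm{id}\otimes(a\cdot\mathrm{id}_M)$, and because the tensor product over the commutative ring $A$ is $A$-bilinear, the scalar $a$ may be pulled across the tensor sign: $\mathrm{id}\otimes(a\cdot\mathrm{id}_M)=a\cdot(\mathrm{id}\otimes\mathrm{id}_M)=a\cdot\mathrm{id}_{K_\bullet(\underline{x};M)}$. Thus $K_\bullet(f)$ is multiplication by $a$ in every degree, and hence so is the induced map on $H_\bullet(\underline{x};M)$.

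The argument is essentially a bookkeeping verification rather than a place where any genuine difficulty arises; the one point that warrants care is the identification $K_\bullet(\underline{x};M)\cong K_\bullet(\underline{x};A)\otimes_A M$ together with the claim that the differential is $d_{\underline{x}}\otimes \mathrm{id}_M$. This is where the sign conventions of the total complex $T_\bullet$ could in principle intervene, but they do not obstruct anything here: since $f$ (and multiplication by $a$) preserves homological degree, the induced map is of multidegree zero and therefore commutes with each signed component of the total differential. If one prefers to follow the recursive definition literally, the same statements can be proved by induction on $n$, the base case $n=1$ being the commutativity $f\circ x_1=x_1\circ f$ in the two-term complex $0\to M\xrightarrow{x_1}M\to 0$, and the inductive step applying the identity-tensor construction to the factor $K_\bullet(x_n;-)$.
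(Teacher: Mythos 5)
Your proposal is correct and is exactly the verification the paper has in mind: the paper gives no argument of its own, declaring the result ``obvious from the definition'' with a pointer to [BH, Section 1.6], and your identification $K_{\bullet}(\under{x};M)\cong K_{\bullet}(\under{x};A)\otimes_A M$ with differential $d\otimes \mathrm{id}_M$, chain map $\mathrm{id}\otimes f$, and the bilinearity argument for multiplication by $a$ is the standard spelling-out of that definition-level observation. No gaps; your side remark that sign conventions in $T_{\bullet}$ are harmless because $M$ sits in degree $0$ is the one point worth having checked, and you checked it.
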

  
  This is obvious from the definition of Koszul homology: see Section 1.6 in [BH].
  \begin{cor}
     $I=\Ann_{A}M$ kills all the Koszul homology modules $H_i(\under{x};M)$.
       \end{cor}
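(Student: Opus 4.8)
The plan is to apply the functoriality established in the preceding Proposition to the single morphism given by multiplication by an annihilating element. Fix any $a \in I = \Ann_A M$. Since $a$ annihilates $M$ by hypothesis, the endomorphism of $M$ given by multiplication by $a$ is the \emph{zero} map.

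I would then play off the two descriptions that the Proposition gives of the induced map on homology against each other. On one hand, because $H_i(\under{x}; -)$ is a functor (the first half of the Proposition) and a functor between additive categories sends the zero morphism to the zero morphism, the map $H_i(\under{x}; M) \lrarrow H_i(\under{x}; M)$ induced by multiplication by $a$ is the zero map. On the other hand, the second half of the Proposition identifies this very same induced map with multiplication by $a$ on $H_i(\under{x}; M)$. Comparing the two descriptions forces multiplication by $a$ to act as zero on $H_i(\under{x}; M)$.

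Since $a \in I$ was arbitrary and $i$ was arbitrary, this yields $a \cdot H_i(\under{x}; M) = 0$ for every $a \in I$ and every $i$, which is precisely the assertion that $I$ kills all the Koszul homology modules. The only point requiring any care is the verification that the zero endomorphism of $M$ induces the zero chain map on $K_{\bullet}(\under{x}; M)$, but this is immediate from the construction of the complex: each of its terms is a finite direct sum of copies of $M$, and the induced chain map applies multiplication by $a$ componentwise, hence vanishes. I anticipate no serious obstacle here, since the substantive content has already been packaged into the Proposition; the Corollary is essentially the observation that the two conclusions of the Proposition are incompatible unless $a$ acts trivially on homology.
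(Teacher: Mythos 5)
Your proposal is correct and matches the paper's intended argument: the Corollary is stated as an immediate consequence of the preceding Proposition, exactly by noting that multiplication by $a \in \Ann_A M$ is the zero endomorphism of $M$, so the induced map on $H_i(\under{x};M)$ is simultaneously zero (by functoriality) and multiplication by $a$ (by the second half of the Proposition). Your added verification that the zero map on $M$ induces the zero chain map componentwise is a fine, if unneeded, elaboration of the same route.
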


   \begin{prop}
   $(x_1,...,x_n)A $ kills every $H_{i}(x_1,...,x_n;M)$
  \end{prop}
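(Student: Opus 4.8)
The plan is to prove the sharper statement that each individual $x_j$ annihilates every $H_i(\under{x};M)$; since Koszul homology is an $A$-module and the $x_j$ generate the ideal $(x_1,\dots,x_n)A$, this immediately gives that the whole ideal kills $H_i(\under{x};M)$. By the preceding Proposition, multiplication by the scalar $x_j \in A$ on the complex $K_{\bul}(\under{x};M)$ induces multiplication by $x_j$ on homology, so it suffices to show that multiplication by $x_j$ on the \emph{complex} $K_{\bul}(\under{x};M)$ is chain homotopic to the zero map; a null-homotopic map induces $0$ on homology, forcing multiplication by $x_j$ to vanish there.

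First I would dispose of the single-element case. On $K_{\bul}(x_j;A)\colon 0 \to A \xrightarrow{x_j} A \to 0$, define the degree $+1$ map $s$ to be the identity $A \to A$ sending degree $0$ to degree $1$. A one-line check in each degree gives $ds + sd = x_j\cdot\mathrm{id}$, so multiplication by $x_j$ is null-homotopic on $K_{\bul}(x_j;A)$. Next I would promote this to the full complex through the tensor-product description $K_{\bul}(\under{x};M) = T_{\bul}\bigl(K_{\bul}(x_1;A)\otimes_A \cdots \otimes_A K_{\bul}(x_n;A)\otimes_A M\bigr)$. Because everything in sight is an $A$-module and $x_j \in A$, multiplication by the scalar $x_j$ on the total complex is the same as the map obtained by applying multiplication by $x_j$ on the single factor $K_{\bul}(x_j;A)$ together with the identity on all other factors and on $M$. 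Tensoring the homotopy $s$ from the previous step with the identity maps on the remaining factors yields a degree $+1$ map $S_j$ on the total complex, and invoking the standard fact that a null-homotopic endomorphism of one tensor factor stays null-homotopic after tensoring with identities, I conclude $dS_j + S_j d$ equals multiplication by $x_j$ on $K_{\bul}(\under{x};M)$.

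The only delicate point I anticipate is the sign bookkeeping in passing to the total complex: one must verify that the Koszul signs in the total differential make the cross terms $d^{(k)}S_j + S_j d^{(k)}$ for $k \neq j$ cancel in pairs, leaving exactly the $k=j$ contribution $x_j\cdot\mathrm{id}$. This is purely formal and can be checked either directly on exterior-algebra monomials or by appealing to the general lemma on tensor products of complexes. Once the null-homotopy $S_j$ is in hand, multiplication by $x_j$ is zero on $H_i(\under{x};M)$ for every $i$ and every $j$, and hence $(x_1,\dots,x_n)A$ kills every $H_i(x_1,\dots,x_n;M)$, as claimed.
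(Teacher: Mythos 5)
Your argument is correct. Note that the paper does not actually prove this proposition itself; it defers to Section 1.6 of [BH], where the standard argument is a one-line exterior-algebra computation: for a cycle $z$, the derivation property of $d$ (stated in the paper just after Proposition 7) gives $d(u_j \wedge z) = x_j z - u_j \wedge dz = x_j z$, so multiplication by $x_j$ carries cycles to boundaries --- equivalently, left exterior multiplication by $u_j$ is a null-homotopy for $x_j \cdot \mathrm{id}$. Your proof arrives at the same homotopy by a different construction: you verify $ds + sd = x_j \cdot \mathrm{id}$ on the rank-one complex $K_{\bul}(x_j;A)$ (your check there is right) and then propagate it through the tensor-product description of $K_{\bul}(\under{x};M)$, using the fact that a null-homotopy on one tensor factor, decorated with the Koszul sign $(-1)^{|c_1|+\cdots+|c_{j-1}|}$, remains a null-homotopy on the total complex; the two-factor computation you allude to does show the cross terms $d^{(k)}S_j + S_j d^{(k)}$ cancel in pairs, so the ``delicate point'' you flagged is genuinely only formal. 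Indeed, if you unwind your $S_j$ through the identification $K_t(\under{x};A) \cong \wedge^t K_1(\under{x};A)$, you recover exactly $z \mapsto \pm\, u_j \wedge z$, so the two proofs exhibit the same homotopy in different packaging. What your route buys is self-containedness and compatibility with the paper's recursive tensor-product definition of the Koszul complex, needing no exterior-algebra identities; what the [BH] route buys is brevity once the derivation formula for $d$ is in hand. Either way, since each $x_j$ acts as zero on $H_i(\under{x};M)$ and the $x_j$ generate $(x_1,\dots,x_n)A$, the conclusion follows as you state.
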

 A proof is provided in Section 1.6 in [BH].

   We can characterize the Koszul complex $K(\under{x};A)$ from the point of view of exterior algebra. Let $K_1(\under{x};A)=Au_1 \oplus Au_2\oplus \cdots \oplus Au_n$. Then $K_t(\under{x};A)\cong \wedge^tK_1(\under{x};A)$. Note that the formula for  $d_i$ comes from the fact that it is derivation on the exterior algebra, namely 
   \[ d(u_{j_1}\wedge u_{j_2} \cdots \wedge u_{j_i})=\sum_{t=1}^{i} (-1)^{t-1} x_{j_t} u_{j_1} \wedge u_{j_2} \wedge \cdots \wedge u_{j_[t-1]} \wedge u_{j_{t+1}}\cdots \wedge u_{j_t}
   \]
\begin{rem}
The $d_i$ are uniquely determined by $d_1$, where $d_1(u_j)=x_j$. Given an invertible linear transformation $L: A^n \to A^n$ such that $L(x_i)=y_i$, we have an induced isomorphism $\wedge A^n \to \wedge A^n$, which gives an isomorphism $K_{\bul}(\under{x};A) \cong K_{\bul}(\under{y};A)$. Therefore $H_{i}(\under{x}; A)\cong H_{i}(\under{y};A)$ and $H_{i}(\under{x}; M)\cong H_{i}(\under{y};M)$.
\end{rem}

The following conjecture motivates the study of the minimal number of generators of $H_{1}(\under{x};A)$ when $A$ is an Artinian local ring.
 \begin{conj}
    Let $(R,m)$ be a regular local ring with Krull dimension $n$. Suppose $I+J$ is an $m-$primary ideal in $R$ and $R/I,R/J$ are Cohen-Macaulay. Then $\Tor_1(R/I, R/J)$ needs at least $\text{ht}(I) +\text{ht}(J)-n$ generators. In particular, if $I,J$ are two $m-$primary ideals, then $\Tor_1(R/I, R/J)$ needs at least $n$ generators.
 \end{conj}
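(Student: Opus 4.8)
The plan is to deduce the conjecture, in the equal-characteristic case, from the statement that $\nu(H_1(\under{x};A)) \geq n$ for an Artinian local ring $A$ and elements $x_1,\dots,x_n$ in its maximal ideal; the bridge between the two is a reduction to the diagonal. First I would pass to the completion: completion is faithfully flat and preserves $\nu$, heights, the Cohen--Macaulay property, and $m$-primariness, so we may assume $(R,m)$ is complete. Being equicharacteristic and regular, Cohen's structure theorem lets us write $R = k[[x_1,\dots,x_n]]$, and I would form $S = R \mathbin{\widehat{\otimes}}_k R = k[[x_1,\dots,x_n,y_1,\dots,y_n]]$, which surjects onto $R$ with kernel the diagonal ideal $\Delta = (x_1-y_1,\dots,x_n-y_n)$. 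Since $\Delta$ is generated by a regular sequence of length $n$, the Koszul complex $K_{\bul}(\xi_1,\dots,\xi_n;S)$ with $\xi_i = x_i - y_i$ is a free resolution of $R$ over $S$.

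Setting $B = R/I \mathbin{\widehat{\otimes}}_k R/J$, reduction to the diagonal then gives $\Tor_1^R(R/I,R/J) \cong \Tor_1^S(B,R) \cong H_1(\under{\xi};B)$, where now $\xi_i$ denotes the image of $x_i - y_i$ in $B$. I would next record the relevant properties of $B$: because a tensor product over a field of two Cohen--Macaulay local $k$-algebras is again Cohen--Macaulay, $B$ is Cohen--Macaulay of dimension $d = (n-\text{ht}\,I)+(n-\text{ht}\,J) = 2n - \text{ht}\,I - \text{ht}\,J$; and since $B/(\under{\xi})B \cong R/(I+J)$ is Artinian, the ideal $(\under{\xi})B$ is $m_B$-primary. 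Thus $\xi_1,\dots,\xi_n$ is an $m_B$-primary set of $n$ elements in a Cohen--Macaulay ring of dimension $d \leq n$ (the inequality $d\le n$ being equivalent to $\text{ht}\,I + \text{ht}\,J \ge n$, which holds by the dimension formula for $R/(I+J)$).

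The key reduction to the Artinian situation is to split off a maximal regular sequence. After enlarging the residue field if necessary and applying a general invertible linear change of the $\xi_i$ — permissible by the Remark identifying $H_i(\under{\xi};B)$ with $H_i(\under{y};B)$ under invertible linear transformations — I may assume $\xi_1,\dots,\xi_d$ is a regular sequence on $B$, since a system of parameters on a Cohen--Macaulay ring is regular. Then $K_{\bul}(\xi_1,\dots,\xi_d;B)$ resolves $B' := B/(\xi_1,\dots,\xi_d)B$, which is Artinian, and tensoring this quasi-isomorphism against the bounded free complex $K_{\bul}(\xi_{d+1},\dots,\xi_n;B)$ yields $H_1(\under{\xi};B) \cong H_1(\bar\xi_{d+1},\dots,\bar\xi_n;B')$, a first Koszul homology of $n-d = \text{ht}\,I+\text{ht}\,J-n$ elements of $m_{B'}$ over the Artinian ring $B'$. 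Applying the Koszul bound $\nu(H_1) \geq (\text{number of elements})$ to $B'$ then delivers exactly $\nu(\Tor_1^R(R/I,R/J)) \geq \text{ht}\,I+\text{ht}\,J-n$; in the special case that $I,J$ are both $m$-primary we have $d=0$, so no splitting is needed and the bound is $n$.

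The principal obstacle is the Artinian Koszul bound invoked at the last step: the inequality $\nu(H_1(\under{x};A)) \geq n$ is itself the central open problem, and the main theorem supplies only the weaker $n + \cid(A) - \cid(A/(\under{x})A)$, which recovers the full bound precisely when $\cid(A) \geq \cid(A/(\under{x})A)$. A secondary, more technical point is the passage from the given Cohen--Macaulay, $m$-primary-sum hypotheses to the clean diagonal picture: the regular-sequence splitting above requires the generic linear change to keep $\xi_1,\dots,\xi_d$ simultaneously a system of parameters and a regular sequence while leaving the remaining $\xi_i$ inside the maximal ideal, and it is here — together with the use of Cohen's structure theorem and the Künneth-type preservation of Cohen--Macaulayness under the completed tensor product — that the equal-characteristic hypothesis is essential and the argument would not obviously transfer to mixed characteristic.
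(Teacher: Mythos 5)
Your proposal reproduces, essentially step for step, the paper's own deduction of this conjecture from Conjecture 10 in the equicharacteristic case: completion and Cohen's structure theorem, Serre's reduction to the diagonal with $B = R/I \,\widehat{\otimes}_k\, R'/J'$ Cohen--Macaulay of dimension $2n-\text{ht}(I)-\text{ht}(J)$, enlargement to an infinite residue field so that general $k$-linear combinations of the $X_i-Y_i$ yield a maximal regular sequence on $B$, and iterated killing of that sequence (the paper's Lemma 13, which you rephrase as tensoring the Koszul quasi-isomorphism) to land in an Artinian ring where the bound of Conjecture 10 gives exactly $\text{ht}(I)+\text{ht}(J)-n$. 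You are also right to flag the argument as conditional, just as the paper does: the statement remains a conjecture because the Artinian Koszul bound invoked at the last step is Conjecture 10 itself, of which the Main Theorem proves only the weaker $n+\cid(A)-\cid(A/(\underline{x})A)$ version.
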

 
 \begin{conj}
    If $(A,m)$ is Artin local, $x_1,..,x_n \in m$, then $H_1(\under{x};A)$ has at least $n$ generators as an $A$-module.
 \end{conj}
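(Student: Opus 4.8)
The plan is to induct on $n$. The base case $n=1$ is immediate: $H_1(x_1;A)=(0:_A x_1)$, and since $A$ has finite length and $x_1\in m$ is a nonunit, multiplication by $x_1$ on $A$ is not injective, so $(0:_A x_1)\neq 0$ and $\nu(H_1(x_1;A))\ge 1$. For $n>1$, I would first dispose of the case in which $x_1,\dots,x_n$ do not minimally generate $\mathfrak{a}=(x_1,\dots,x_n)A$, say $\nu(\mathfrak{a})=r<n$. Invariance of Koszul homology under an invertible linear change of the tuple (the Remark after the exterior-algebra description) lets me assume, after such a change, that $x_1,\dots,x_r$ minimally generate $\mathfrak{a}$ and $x_{r+1},\dots,x_n\in (x_1,\dots,x_r)A$; a further unipotent change then makes $x_{r+1}=\dots=x_n=0$. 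A K\"unneth computation for the zero entries gives $H_1(\under{x};A)\cong H_1(x_1,\dots,x_r;A)\oplus (A/\mathfrak{a})^{\,n-r}$, so $\nu(H_1(\under{x};A))=\nu(H_1(x_1,\dots,x_r;A))+(n-r)\ge r+(n-r)=n$ by the inductive hypothesis applied to the $r<n$ elements $x_1,\dots,x_r$. This reduces the statement to the case where $x_1,\dots,x_n$ minimally generate $\mathfrak{a}$.

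In the remaining case I would run the inductive step through the long exact Koszul sequence for the recursion $K_\bullet(\under{x};A)=T_\bullet(K_\bullet(\under{x}';A)\otimes_A K_\bullet(x_n;A))$, where $\under{x}'=(x_1,\dots,x_{n-1})$. Setting $A'=A/(\under{x}')=H_0(\under{x}';A)$, its degree-one part is a short exact sequence
\[
0\longrightarrow C\longrightarrow H_1(\under{x};A)\longrightarrow D\longrightarrow 0,
\qquad C=\frac{H_1(\under{x}';A)}{x_nH_1(\under{x}';A)},\quad D=(0:_{A'}x_n).
\]
Two facts are favorable. Since $x_n\in m$ we have $x_nH_1(\under{x}';A)\subseteq mH_1(\under{x}';A)$, so Nakayama gives $C\otimes_A k=H_1(\under{x}';A)\otimes_A k$ and hence $\nu(C)=\nu(H_1(\under{x}';A))\ge n-1$ by the inductive hypothesis. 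And $D\neq 0$, because the nonunit $x_n$ cannot act injectively on the nonzero finite-length ring $A'$, so $\nu(D)\ge 1$.

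Tensoring the short exact sequence with $k$ yields the exact row $\Tor_1^A(D,k)\xrightarrow{\ \delta\ }C\otimes_A k\to H_1(\under{x};A)\otimes_A k\to D\otimes_A k\to 0$, giving the generator count
\[
\nu(H_1(\under{x};A))=\nu(C)+\nu(D)-\operatorname{rank}\delta .
\]
With $\nu(C)\ge n-1$ and $\nu(D)\ge 1$, the desired inequality $\nu(H_1(\under{x};A))\ge n$ follows once $\operatorname{rank}\delta\le \nu(C)+\nu(D)-n$; in particular it suffices that the connecting map $\delta$ vanish. The same obstruction appears intrinsically: the first-quadrant base-change spectral sequence $E^2_{p,q}=\Tor_p^A(H_q(\under{x};A),k)\Rightarrow H_{p+q}(\under{x};k)$, coming from $K_\bullet(\under{x};A)\otimes_A P_\bullet$ for a free resolution $P_\bullet\to k$, has $H_1(\under{x};k)\cong k^{\,n}$ and collapses in total degree one to the identity
\[
\nu(H_1(\under{x};A))=n-\nu(\mathfrak{a})+\operatorname{rank}\big(d^2\colon \Tor_2^A(A/\mathfrak{a},k)\to H_1(\under{x};A)\otimes_A k\big).
\]
Thus the conjecture is equivalent to $\operatorname{rank} d^2\ge \nu(\mathfrak{a})$, an intrinsic reformulation encoding the same obstruction as $\delta$.

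I expect the control of $\delta$ (equivalently $d^2$) to be the crux and the main obstacle, since it is precisely the failure of $\nu(-)$ to be additive on the short exact sequence: a priori some minimal generators of the submodule $C$ may be absorbed into $mH_1(\under{x};A)$. Note that $\operatorname{rank} d^2\le \dim_k\Tor_2^A(A/\mathfrak{a},k)$, so even the necessary condition $\operatorname{rank} d^2\ge\nu(\mathfrak{a})$ already forces the nondecrease $\dim_k\Tor_2^A(A/\mathfrak{a},k)\ge \nu(\mathfrak{a})=\dim_k\Tor_1^A(A/\mathfrak{a},k)$ of the first two Betti numbers of $A/\mathfrak{a}$, a delicate point about resolutions over the non-regular Artinian ring $A$. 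To force the required rank I would try to exploit the graded-commutative DG-algebra structure on $H_\bullet(\under{x};A)$, the self-duality $H^i(\under{x};A)\cong H_{n-i}(\under{x};A)$ of the Koszul complex, and Matlis duality over $A$, any of which may constrain $d^2$ from below; this is where I anticipate the real difficulty of the conjecture to lie.
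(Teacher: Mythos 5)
You should be clear at the outset that the statement you were given is Conjecture 10 of the paper: it is \emph{open}, and the paper itself proves it only for $n=1,2$, plus the weaker bound $\nu(H_1(\under{x};A))\geq n+\cid(A)-\cid(A/(\under{x})A)$ of the Main Theorem in the equicharacteristic case. Your proposal is accordingly not a proof, and to your credit you say so; but it is worth naming exactly where it stops being one. The parts you do establish are correct: the base case $n=1$ (this is the paper's own argument), the reduction to the case where $x_1,\dots,x_n$ minimally generate $\mathfrak{a}$ via the splitting $H_1(\under{x};A)\cong H_1(x_1,\dots,x_r;A)\oplus (A/\mathfrak{a})^{n-r}$ (this matches Discussion 11), the degree-one piece $0\to C\to H_1(\under{x};A)\to D\to 0$ of the Koszul long exact sequence with $\nu(C)=\nu(H_1(\under{x}';A))$ by Nakayama, and the spectral-sequence identity $\nu(H_1(\under{x};A))=n-\nu(\mathfrak{a})+\operatorname{rank} d^2$, which I have checked. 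The gap is that the inductive step needs $\operatorname{rank}\delta\leq \nu(C)+\nu(D)-n$, and you supply no mechanism whatsoever to bound $\operatorname{rank}\delta$: minimal generators of the submodule $C$ can in principle be absorbed into $mH_1(\under{x};A)$, and nothing in the DG-algebra structure, Koszul self-duality, or Matlis duality is shown (here or in the paper) to prevent this. Your ``intrinsic reformulation'' $\operatorname{rank} d^2\geq \nu(\mathfrak{a})$ makes the circularity visible: it is an equivalent restatement of the conjecture, not a reduction of it. Tellingly, your framework as written does not even recover the known case $n=2$.

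For comparison, the paper's two partial results use quite different tools, neither of which appears in your sketch. The $n=2$ case comes from a length count: by Theorem 15 the Euler characteristic vanishes, so $l(H_1(x_1,x_2;A))=l(A/(x_1,x_2))+l(\Ann_A(x_1,x_2))>l(A/(x_1,x_2))$, while a cyclic module killed by $(x_1,x_2)$ (Proposition 7) has length at most $l(A/(x_1,x_2))$ --- a contradiction if $H_1$ were cyclic; you could graft this onto your induction to secure $n=2$, though it does not iterate to higher $n$. The Main Theorem instead lifts a presentation: write $A$ as a quotient of $T=k[[X_1,\dots,X_n,Y_1,\dots,Y_m]]$, choose $F_1,\dots,F_m$ forming part of a minimal generating set of $\Ker(\theta)$ that is also a system of parameters (Lemma 18), lift them to $\wtld{F_j}\in\fk{a}$, and pass to $S=T/(\wtld{F_1},\dots,\wtld{F_m})$, on which $\under{x}$ is a regular sequence, so that $H_1(\under{x};A)\cong \Tor_1^S(S/(\under{x})S,A)$; counting generators in a minimal $S$-free resolution then gives $\nu(H_1)\geq n_1-\nu(W)=n+\cid(A)-\cid(A/(\under{x}))$. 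Translated into your language, when the $x_i$ minimally generate $\mathfrak{a}$ this says $\operatorname{rank} d^2\geq \nu(\mathfrak{a})+\cid(A)-\cid(A/(\under{x}))$, so the genuinely open content --- the part your $\delta$ encodes --- is exactly the case $\cid(A)<\cid(A/(\under{x})A)$.
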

The fact that the latter conjecture implies the first is one of the reasons we study the minimal number of generators of the first Koszul homology module.

\begin{disc}
We note that Conjecture 10 reduces to the case where the Koszul elements $x_1,..,x_n$ are minimal generators of the ideal they generate. If not, by applying an invertible matrix $N$(see Remark 8) to $x_1,...,x_n$ we may assume $x_{n}=0$. In this case $K_{\bul}(x_1,...,x_{n-1})$ is the homology of the mapping cone of the $0$ map from $K_{\bul}(x_1,...,x_n;M)$ to itself, which implies $K_1(x_1,...,x_{n-1},0;M) \cong K_0(x_1,...,x_{n-1};M) \oplus K_1(x_1,...,x_{n-1};M)$. Applying this to $M=A$ yields the result. 
\end{disc}
\begin{prop}
 Conjecture 10 implies conjecture 9 in the equi-characteristic case.
 \end{prop}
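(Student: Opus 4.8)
The plan is to assume Conjecture 10 and deduce Conjecture 9 by converting $\Tor_1^R(R/I,R/J)$ into a first Koszul homology module on an Artinian ring, via Serre's reduction to the diagonal; this is exactly the step for which one needs $R$ to contain a field. First I would make a chain of harmless faithfully flat reductions: pass to the completion, and then enlarge the residue field to be infinite by a base change $\widehat R \to \widehat R\,\widehat{\otimes}_k\,k'$. Faithful flatness preserves heights, the Cohen--Macaulay hypotheses on $R/I$ and $R/J$, the property that $I+J$ is $m$-primary, and the minimal number of generators of $\Tor_1$, so I may assume $R=k[[t_1,\dots,t_n]]$ with $k$ infinite. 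Writing $a=\mathrm{ht}(I)$ and $b=\mathrm{ht}(J)$, we have $\di(R/I)=n-a$, $\di(R/J)=n-b$, and $a+b\ge n$ because $I+J$ is $m$-primary.

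Next set $S=R\,\widehat{\otimes}_k R=k[[t_1,\dots,t_n,s_1,\dots,s_n]]$, a regular local ring of dimension $2n$, and let $\delta_i=t_i-s_i$. The multiplication map $S\to R$ is surjective with kernel $\Delta=(\delta_1,\dots,\delta_n)$; since $R=S/\Delta$ is regular of dimension $n$, the $\delta_i$ form a regular sequence and $K_{\bul}(\under{\delta};S)$ resolves $R$ over $S$. Reduction to the diagonal then identifies $\Tor$ with Koszul homology: setting $C:=(R/I)\,\widehat{\otimes}_k(R/J)$, one gets
\[ \Tor_1^R(R/I,R/J)\cong \Tor_1^S(S/\Delta,\,C)\cong H_1(\under{\delta};C). \]
Here $C$ is local with residue field $k$, it is Cohen--Macaulay of dimension $d:=2n-a-b$ (a completed tensor product over a field of Cohen--Macaulay local $k$-algebras), and $C/(\under{\delta})C\cong (R/I)\otimes_R(R/J)=R/(I+J)$ is Artinian, so $(\under{\delta})C$ is primary to the maximal ideal.

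Now I would extract the Artinian Koszul homology. Because $k$ is infinite and $(\under{\delta})C$ is $\mathfrak n$-primary, general $k$-linear combinations of $\delta_1,\dots,\delta_n$ give a system of parameters; by Remark 8 the Koszul homology is unchanged under an invertible linear transformation of the Koszul elements, so I may assume $\delta_1,\dots,\delta_d$ is a system of parameters, hence a regular sequence since $C$ is Cohen--Macaulay. The standard behavior of Koszul homology modulo a regular sequence yields
\[ H_1(\under{\delta};C)\cong H_1(\bar\delta_{d+1},\dots,\bar\delta_n;\,\bar C),\qquad \bar C:=C/(\delta_1,\dots,\delta_d), \]
the first Koszul homology of the $n-d=a+b-n$ elements $\bar\delta_{d+1},\dots,\bar\delta_n$ on the Artinian local ring $\bar C$. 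Conjecture 10 then gives at least $a+b-n$ generators over $\bar C$. Finally, since $\Tor_1^R(R/I,R/J)$ is killed by $(\under{\delta})$ (Proposition 6) its $C$-structure factors through $\bar C$, and as $R\to C$ sends $m$ into the maximal ideal of $C$ we have $\nu_R(\Tor_1)\ge \nu_{\bar C}(\Tor_1)\ge a+b-n$, which is precisely Conjecture 9; the ``in particular'' case $a=b=n$ gives $d=0$ and the bound $n$.

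The hard part, and the only place equal characteristic is essential, is the reduction to the diagonal: one must justify carefully that $S=R\,\widehat{\otimes}_k R$ is regular local of dimension $2n$ with $\Delta$ a complete intersection, and that $\Tor_1^R(R/I,R/J)\cong\Tor_1^S(S/\Delta,C)$ in the complete equicharacteristic setting, along with the Cohen--Macaulayness of $C$ that lets a general subfamily of the $\delta_i$ be a regular sequence and the compatibility of $\nu$ across the ring maps $R\to C\to\bar C$. Once this identification is in hand, the rest follows formally from Remark 8, Proposition 6, and Conjecture 10.
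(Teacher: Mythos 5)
Your proposal is correct and follows essentially the same route as the paper: Serre's reduction to the diagonal to identify $\Tor_1^R(R/I,R/J)$ with $H_1(\under{\delta};C)$ for $C=(R/I)\,\widehat{\otimes}_k(R/J)$, then (using an infinite residue field) choosing $k$-linear combinations of the diagonal elements so that the first $d=\di C$ of them form a regular sequence on the Cohen--Macaulay ring $C$, killing them via the lemma $H_i(x,\under{y};M)\cong H_i(\under{y};M/xM)$, and applying Conjecture 10 to the remaining $\mathrm{ht}(I)+\mathrm{ht}(J)-n$ elements on the Artinian quotient. The only deviations are cosmetic: you enlarge the residue field by completion and base change $\widehat{R}\,\widehat{\otimes}_k\,k'$ where the paper uses $R[t]_{mR[t]}$, and you spell out the comparison $\nu_R\geq\nu_{\bar C}$ that the paper leaves implicit.
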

The proof uses Serre's method of reduction to the diagonal: [S], pages V-4 through V-6

  \begin{rem}
  $R,I, J$ as in the proposition above, if $R$ contains a field, we may replace $R$ by $\hat{R}$, and then $\hat{R} \cong k[[X_1,...,X_n]]$, the formal power series ring, and $m=(X_1,..,X_n)$. By Serre's result in reduction to the diagonal,
    $$\Tor_{i}^{R}(\frac{R}{I}, \frac{R}{J}) \cong\Tor_{i}^{S}( S /(I^e+J'^e), R).$$
   Here, $R:= k[[X_1,...,X_n]], R':=k[[Y_1,...,Y_n]]$. Then $R \cong R'$ in an obvious way, where $X_i \mapsto Y_i$, let $J'$ be the image of $J$ in $R'$. $S:= R \widehat{\otimes_{k}} R' \cong k[[X_1,...X_n,Y_1,...,Y_n]]$, $I \subset R$, $J' \subset R'$, and $I^e $ and $J'^e$ are the extensions of $I$ and $J'$ to $S$. 
\end{rem}
We give the proof of Conjecture 9 from Conjecture 10.
\begin{lem}
If $x $ is a regular element on $M$, then $H_i(x,y_1,...,y_n;M) \cong H_i(y_1,..,y_n; \frac{M}{xM}).$
\end{lem}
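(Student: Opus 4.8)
The plan is to realize $K_\bul(x,\under{y};M)$ as a mapping cone and then use regularity of $x$ to collapse that cone to a cokernel. By Remark 8 the Koszul complex is, up to isomorphism, independent of the order of its entries, so the recursive definition lets me peel off $x$ first and write
\[
  K_\bul(x,\under{y};M) \;\cong\; T_\bul\bigl(K_\bul(x;A)\otimes_A K_\bul(\under{y};M)\bigr).
\]
Since $K_\bul(x;A)$ is the two-term complex $0\to A\xrightarrow{x}A\to 0$, forming the total complex of this tensor product is precisely the mapping cone of the chain map $x\colon K_\bul(\under{y};M)\to K_\bul(\under{y};M)$, which by Proposition 5 is the map induced by multiplication by $x$ on $M$. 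On chain modules this recovers the usual splitting $K_i(x,\under{y};M)\cong K_i(\under{y};M)\oplus K_{i-1}(\under{y};M)$.

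The one place the hypothesis enters is the following observation, which I would record first. Each chain module $K_i(\under{y};M)\cong \wedge^i(A^n)\otimes_A M$ is a finite direct sum of copies of $M$, and $x$ is a nonzerodivisor on $M$; hence multiplication by $x$ is injective on every $K_i(\under{y};M)$, i.e. the chain map $x\colon K_\bul(\under{y};M)\to K_\bul(\under{y};M)$ is injective in each degree.

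Next I would invoke the standard fact that the mapping cone of a degreewise-injective chain map $f\colon C\to D$ is quasi-isomorphic to $\Coker(f)=D/f(C)$: the projection $\operatorname{Cone}(f)\to\Coker(f)$ is a chain map whose kernel is $\operatorname{Cone}(\mathrm{id}_C)$, and the cone of an identity map is contractible, hence acyclic, so the projection is a quasi-isomorphism. Applying this with $f=x$, the cokernel is
\[
  K_\bul(\under{y};M)\big/\,x\,K_\bul(\under{y};M)\;\cong\;K_\bul(\under{y};M/xM),
\]
because $K_i(\under{y};M)/x\,K_i(\under{y};M)\cong \wedge^i(A^n)\otimes_A(M/xM)=K_i(\under{y};M/xM)$ and the Koszul differentials are manifestly compatible with this identification. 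Combining the two steps gives $K_\bul(x,\under{y};M)=\operatorname{Cone}(x)\simeq K_\bul(\under{y};M/xM)$, and passing to homology yields $H_i(x,\under{y};M)\cong H_i(\under{y};M/xM)$ for all $i$.

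I expect the main obstacle to be purely bookkeeping: verifying that the total-complex differential produced by the definition agrees, signs included, with the mapping-cone differential, and that the order-independence from Remark 8 is applied correctly. Once the cone description is pinned down the argument is formal, and regularity of $x$ is used only to guarantee the degreewise injectivity in the second step.
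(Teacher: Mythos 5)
Your proof is correct and complete, and it is essentially the standard argument in the source the paper cites: the paper gives no proof of this lemma itself, deferring to Section 1.6 of [BH], where $K_{\bul}(x,\under{y};M)$ is likewise realized as the mapping cone of multiplication by $x$ on $K_{\bul}(\under{y};M)$ and the regularity hypothesis enters exactly where you place it, making that chain map a degreewise injection so that the cone collapses onto its cokernel $K_{\bul}(\under{y};M/xM)$. The one step you flag as bookkeeping does check out: the kernel of the projection $\operatorname{Cone}(x)\to\Coker(x)$ in degree $i$ is $K_{i-1}(\under{y};M)\oplus x\,K_i(\under{y};M)$, and injectivity of $x$ identifies this, differentials included, with the cone of the identity, which is contractible via the homotopy $s(c,c')=(c',0)$ (one verifies $ds+sd=\mathrm{id}$ directly); the resulting short exact sequence of complexes then gives the quasi-isomorphism you want. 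One small simplification: you do not actually need Remark 8 to ``peel off $x$ first,'' since the paper's symmetric description $K_{\bul}(x,\under{y};M)=T_{\bul}\bigl(K_{\bul}(x;A)\otimes_A K_{\bul}(y_1;A)\otimes_A\cdots\otimes_A K_{\bul}(y_n;A)\otimes_A M\bigr)$ together with associativity and commutativity of the tensor product of complexes already yields the decomposition, whereas Remark 8 as stated concerns invertible linear changes of the Koszul generators rather than reorderings of tensor factors (permutations are of course a special case, so your appeal is harmless, just heavier than necessary).
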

A proof can be found in Section 1.6 in [BH].
\begin{proof}[Proof of Conjecture 9 from Conjecture 10]

  Recall that $\nu(M)$ denotes the least number of generators of a module $M$. First enlarge $k$ to an infinite field by replacing $(R,m)$ by $R[t]_{m R[t]}$. The hypothesis and conclusion are not affected. We have 
   $$\Tor_1^R(\frac{R}{I},\frac{R}{J})\cong \Tor_1^S(\frac{R}{I} \hat{\otimes}_k \frac{R'}{J}; \frac{S}{X_1-Y_1,...,X_n-Y_n})$$
     Let $(B,\mu)=\frac{R}{I} \hat{\otimes}_k \frac{R'}{J'}$. Then by the results of [S], $B$ is a Cohen-Macaulay ring of dimension $\delta=\di(\frac{R}{I}) + \di(\frac{R'}{J'})=n-\text{ht}(I)+n-\text{ht}(J)$. The images of $X_i- Y_i$ generate a $\mu$-primary ideal in $B$. Since 
   $$\frac{R}{I} \hat{\otimes}_k \frac{R'}{J'} \otimes  \frac{S}{(X_1-Y_1,...,X_n-Y_n)} \cong \frac{R}{I} \otimes_R \frac{R}{J} \cong \frac{R}{I+J},$$
   and $I+J$ is $m-$primary. 
   Let $Z_i=X_i-Y_i$.
    Because $Z_1,...,Z_n$ is a regular sequence in $S$, the Koszul complex $K_{\bullet}(\under{Z};S)$ gives a resolution of $S/(\under{Z})$. Tensoring this resolution with $B$ gives $\Tor_{i}^{S}(B , S/(\under{Z}))$, but it also gives the $H_{i}^{S}(\under{Z}; B)$
    
Now that $k$ is infinite, we can choose $\delta$ $ k$-linear combinations of $X_i-Y_i$ that form a regular sequence on $B$, say $z_1,..,z_{\delta}$, and let $z_1,...,z_n$ extend this to elements that span the same $k$-vector space as the $X_i-Y_i$.
   $$H_1(X_1-Y_1,...,X_n-Y_n;B)\\
   = H_1(\under{Z};B)
   \cong H_1(z_1,..,z_n;B)\\
   \cong H_1(z_{\delta+1},...,z_n; \frac{B}{z_1,...,z_{\delta}})$$ 
   where the last congruence is a result of iteration of Lemma 13.
   But $\frac{B}{z_1,...,z_{\delta}}$ is Artinian local. Therefore, by Conjecture 10, it will need $n-\delta = n-(n-\text{ht}(I)+n-\text{ht}(J)) = \text{ht}(I) +\text{ht}(J)- n $ generators. 
\end{proof}
We need the following well-known fact.
\begin{lem}
Given a finite complex $A_{\bullet}: 0 \to A_{k} \xrarr{d_k} \cdots \xrarr{d_1} A_0 \to 0$ such that each $A_i$ has finite length, the alternating sum of the lengths of homologies equals 0, i.e. $\sum_{i=0}^{k} (-1)^{i} l(H_i(A_{\bullet}))=0$
\end{lem}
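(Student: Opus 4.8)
The plan is to deduce the claim from additivity of length on the canonical short exact sequences of cycles, boundaries, and homology attached to $A_{\bullet}$, arranged so that the boundary contributions telescope. Since every $A_i$ has finite length, each of its submodules and subquotients does too, and the length function is additive on short exact sequences of finite-length modules; this additivity is the only structural input needed beyond index bookkeeping, so I would state it up front.

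Concretely, I would set $Z_i=\Ker(d_i\colon A_i\to A_{i-1})$ and $B_i=\Ima(d_{i+1}\colon A_{i+1}\to A_i)$, with the conventions $d_0=0$ and $d_{k+1}=0$, so that $B_{-1}=0=B_k$ and $Z_0=A_0$. For each $i$ there are two short exact sequences,
\[
0\to Z_i\to A_i\to B_{i-1}\to 0,\qquad 0\to B_i\to Z_i\to H_i\to 0,
\]
the first because $d_i$ has kernel $Z_i$ and image $B_{i-1}$, the second by the definition $H_i=Z_i/B_i$. Additivity then gives $l(A_i)=l(Z_i)+l(B_{i-1})$ and $l(Z_i)=l(B_i)+l(H_i)$, hence $l(A_i)=l(H_i)+l(B_i)+l(B_{i-1})$. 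Taking the alternating sum over $i=0,\dots,k$ and reindexing the second boundary sum by $j=i-1$, the two sums $\sum_i(-1)^i l(B_i)$ and $\sum_i(-1)^i l(B_{i-1})$ cancel termwise (the end terms vanish because $B_{-1}=B_k=0$), which leaves
\[
\sum_{i=0}^{k}(-1)^i l(A_i)=\sum_{i=0}^{k}(-1)^i l(H_i).
\]

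This telescoping is the heart of the matter, and it is entirely routine; the only point requiring attention is getting the boundary indices to match so that the cancellation is exact. The identity just obtained shows that the alternating sum of the homology lengths coincides with the alternating sum of the term lengths, so the asserted equality $\sum_i(-1)^i l(H_i)=0$ is equivalent to the vanishing of $\sum_i(-1)^i l(A_i)$; proving the stated conclusion therefore reduces to checking that the terms balance, $\sum_i(-1)^i l(A_i)=0$, for the complex at hand. Thus I expect no serious obstacle in the homological argument itself — the substance is the additivity-and-telescoping identity above — and the remaining step is the numerical verification that the alternating sum of the term lengths is zero.
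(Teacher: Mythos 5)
Your telescoping argument is correct, and it is the standard proof of the identity it actually establishes: from the two short exact sequences $0\to Z_i\to A_i\to B_{i-1}\to 0$ and $0\to B_i\to Z_i\to H_i\to 0$, additivity of length gives $\sum_{i=0}^{k}(-1)^i l(H_i(A_{\bullet}))=\sum_{i=0}^{k}(-1)^i l(A_i)$, and your index conventions ($B_{-1}=B_k=0$) make the cancellation exact. Since the paper states this lemma as a well-known fact and supplies no proof, there is no argument of the paper's to compare against; your write-up supplies the missing standard one.

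The genuine gap is in your final paragraph. You reduce the assertion to the vanishing of $\sum_{i}(-1)^i l(A_i)$ and describe this as a routine ``numerical verification,'' but that vanishing does not follow from the stated hypotheses and is false in general: the complex $0\to A_0\to 0$ with $A_0=k$ a field is a finite complex of finite-length modules with $\sum_i(-1)^i l(H_i)=1\neq 0$. So the lemma as literally printed is false, and no proof of it can exist; what your argument shows is that its correct content is the Euler-characteristic identity $\sum_i(-1)^i l(H_i)=\sum_i(-1)^i l(A_i)$, with ``$=0$'' requiring the extra hypothesis that the alternating sum of the term lengths vanishes. That hypothesis does hold in the paper's one application (the $n=2$ case of Conjecture 10), where $A_{\bullet}$ is the Koszul complex $K_{\bullet}(x_1,\dots,x_n;A)$ over an Artinian ring, whose $i$-th term is $A^{\binom{n}{i}}$, so that $\sum_i(-1)^i l(A_i)=l(A)\sum_i(-1)^i\binom{n}{i}=l(A)(1-1)^n=0$ for $n\geq 1$. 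Had you recorded that hypothesis explicitly (or restricted the statement to such complexes), your proof would be complete; as written, the ``remaining step'' you defer is not routine but impossible from the given assumptions.
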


\begin{thm}
Suppose $(A,m)$ is local, $x_1,...,x_n \in m$, $M$ is finitely generated, and $\frac{M}{(x_1,...,x_n)M} $ has finite length. If $\di(M) <n$, then $\chi(x_1,,.x_n;M):=\sum(-1)^i l(H_i(\under{x};M))=0$. If $\di(M)=n$, $\chi(\under{x};M) $ is the multiplicity of M on $(x_1,...,x_n)R$.
\end{thm}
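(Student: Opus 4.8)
The plan is to induct on $n$, using additivity of $\chi$ along short exact sequences as the engine and Lemma 13 to peel off one variable at a time. First I would record that $\chi(\underline{x};M)$ is well-defined: the finite-length hypothesis on $M/\underline{x}M$ says precisely that $(\underline{x})+\Ann M$ is $m$-primary, and since both $(\underline{x})$ and $\Ann M$ annihilate every $H_i(\underline{x};M)$ (by the Corollary and the Proposition above), each $H_i(\underline{x};M)$ is a finitely generated module over the Artinian ring $A/((\underline{x})+\Ann M)$, hence of finite length. In particular the hypothesis already forces $\di(M)\le n$, so the two stated cases are exhaustive. Next I would establish additivity: applying the exact functor $K_\bullet(\underline{x};-)=(-)\otimes_A K_\bullet(\underline{x};A)$ (tensoring with a bounded complex of free modules) to a short exact sequence $0\to M'\to M\to M''\to 0$ yields a short exact sequence of complexes and hence a long exact homology sequence; regarding this as an acyclic complex of finite-length modules and invoking the additivity of lengths (Lemma 15) gives $\chi(\underline{x};M)=\chi(\underline{x};M')+\chi(\underline{x};M'')$. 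Every submodule and quotient of $M$ still has finite length after killing $\underline{x}$, so all three terms are legitimate.

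For the base of the induction I would take $n=0$, where $\chi(\varnothing;M)=l(M)$; this is $0$ when $M=0$ and equals the multiplicity $l(M)$ when $\di(M)=0$. The case $n=1$ is equally explicit: $\chi(x_1;M)=l(M/x_1M)-l(0:_M x_1)$, which vanishes when $M$ has finite length (the four-term exact sequence $0\to(0:_M x_1)\to M\xrarr{x_1}M\to M/x_1M\to 0$ forces the two lengths to agree) and equals $e(x_1;M)$ when $\di(M)=1$.

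For the inductive step I would first dispose of the finite-length case: by additivity and a composition series with factors $A/m=k$, together with the computation $\chi(\underline{x};k)=\sum_i(-1)^i\binom{n}{i}=0$ for $n\ge 1$, every finite-length module has $\chi=0$. Now take general $M$ with $\di(M)\ge 1$ and split off its largest finite-length submodule $\Gamma_m(M)$ via $0\to\Gamma_m(M)\to M\to\bar M\to 0$; additivity then gives $\chi(\underline{x};M)=\chi(\underline{x};\bar M)$, where $\bar M\neq 0$ has positive depth and the same dimension as $M$. After enlarging the residue field to be infinite (replacing $A$ by $A[t]_{mA[t]}$, which changes neither lengths nor dimensions nor multiplicities) I would choose one element that is simultaneously a nonzerodivisor on $\bar M$ and superficial for $(\underline{x})$, and by Remark 8 perform an invertible linear change of the $x_i$ so that this element is $x_1$, leaving the ideal $(\underline{x})$ and all Koszul homology unchanged. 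Since $x_1$ is a nonzerodivisor on $\bar M$, Lemma 13 gives $\chi(x_1,\dots,x_n;\bar M)=\chi(x_2,\dots,x_n;\bar M/x_1\bar M)$ with $\di(\bar M/x_1\bar M)=\di(M)-1$, so the inductive hypothesis in $n-1$ variables applies. When $\di(M)<n$ this reads $\chi=0$, finishing the vanishing case; when $\di(M)=n$ it reads $\chi(x_2,\dots,x_n;\bar M/x_1\bar M)=e((x_2,\dots,x_n);\bar M/x_1\bar M)$.

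The last step, and the main obstacle, is the multiplicity bookkeeping when $\di(M)=n$: I must identify $e((x_2,\dots,x_n);\bar M/x_1\bar M)$ with $e((\underline{x});M)$. This rests on two standard facts, namely that passing along a superficial nonzerodivisor drops one generator of the ideal without changing the multiplicity, $e((\underline{x});\bar M)=e((x_2,\dots,x_n);\bar M/x_1\bar M)$, and that multiplicity sees only the top-dimensional part of the module, so $e((\underline{x});M)=e((\underline{x});\bar M)$ for $n\ge 1$. The delicate point is exactly that a \emph{single} element can be chosen to be good in both senses at once---a nonzerodivisor (to invoke Lemma 13) and superficial (to preserve the multiplicity)---which is what forces the reduction to an infinite residue field and the use of superficial-element theory. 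By contrast, the vanishing case $\di(M)<n$ needs only a nonzerodivisor and is therefore essentially formal once additivity and Lemma 13 are in hand.
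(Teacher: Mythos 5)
Your proposal is correct in substance, but you should know that the paper offers no proof of its own here: it defers to Serre [S, Th\'eor\`eme 1, p.\ V-12], and your route is genuinely different from Serre's. Serre filters the Koszul complex $\mathfrak{q}$-adically (with $\mathfrak{q}=(\under{x})$), passes to the associated graded complex of initial forms over $\operatorname{gr}_{\mathfrak{q}}A$, a graded quotient of a polynomial ring, and reads off both cases at once from a Hilbert-series identity in which the multiplicity appears as a leading coefficient; no choice of special elements and no residue-field enlargement is needed. You instead run the classical Auslander--Buchsbaum/Nagata-style induction: additivity of $\chi$ along short exact sequences (note your citation should be to Lemma 14, the acyclicity lemma --- ``Lemma 15'' is the theorem itself), vanishing for finite-length modules via a composition series and $\chi(\under{x};k)=(1-1)^n=0$, splitting off $\Gamma_m(M)$, and peeling off one variable with Lemma 13 using an element that is simultaneously a nonzerodivisor and superficial, after the faithfully flat passage to $A[t]_{mA[t]}$ --- the same infinite-residue-field device the paper itself uses in deducing Conjecture 9 from Conjecture 10. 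Two points deserve explicit justification in your write-up. First, the existence of the doubly good element: since $\bar{M}/\mathfrak{q}\bar{M}$ has finite length and $\operatorname{depth}\bar{M}>0$, no associated prime of $\bar{M}$ contains $\mathfrak{q}$, so over the now-infinite residue field a general $k$-linear combination of the $x_i$ avoids all associated primes and is superficial, and Remark 8 lets you rename it $x_1$; this is exactly where the finite-length hypothesis enters beyond making $\chi$ well-defined. Second, the superficial-element identities you quote --- $e(\mathfrak{q};\bar{M})=e(\mathfrak{q}/(x_1);\bar{M}/x_1\bar{M})$ for $\di \bar{M}\geq 2$, read in dimension one as $e(\mathfrak{q};\bar{M})=l(\bar{M}/x_1\bar{M})$, consistent with your $n=0$ convention $\chi(\varnothing;N)=l(N)$ --- are standard but carry the real content of the multiplicity case and need a reference (e.g.\ Nagata, or [BH, Ch.\ 4]); relatedly, your $n=1$ base-case claim that $\chi(x_1;M)=e(x_1;M)$ is not ``equally explicit,'' though this is harmless since your induction actually bottoms out at $n=0$. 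As for what each approach buys: yours is elementary and makes the reduction-by-one-parameter structure visible, at the cost of superficial-element theory and field enlargement; Serre's is choice-free, handles both cases uniformly, and sits naturally beside the non-negativity results he proves in the same framework.
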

A proof can be found in [S].Th\'eor\`eme 1, page $\RNum{5}$-12

For Conjecture 10, the cases when $n=1$ or $2$ are known. We include the argument for the convenience of the reader.
\begin{proof}[Proof of Conjecture 10 when $n=1$ or $2$]
 When n=1, $H_1(x;A)$is just $\Ann_{A}x$, since $x \in m$ and $A$ is Artinian, so $m$ consists of zero divisors, thus, $\Ann_{A}x \neq 0$, so it has to be generated by at least 1 element as an $A$-module.\\
 When n=2, suppose $\nu(H_1(\under{x};A))<2$, then it is cyclic. Since $A$ is Artinian, every finite module has finite length, and the Euler characteristic of the Koszul complex is 0. Thus, by Theorem 15,  $l(H_1(x_1,x_2; A))-l(\frac{A} {(x_1,x_2)})- l(\Ann_A(x_1,x_2))=0$. We know $\Ann_A(x_1,x_2) \neq 0$, so $\nu (H_1)\neq 0$, and if $H_1(\under{x};A)$ is cyclic, then since it is killed by $(x_1,x_2)$ by Proposition 7, it is a quotient module of $A/(x_1,x_2)$. Therefore, $l(H_1(\under{x};A)) \leq l(A/(x_1,x_2))$. But, as we noted earlier, $l(\Ann_A(x_1,x_2)) >0$, so the alternating sum $l(H_1(x_1,x_2; A))-l(\frac{A} {(x_1,x_2)})- l(\Ann_A(x_1,x_2)) <0$, a contradiction. Thus, we have $\nu (H_1(x_1,x_2;A)) \geq 2$.
 
\end{proof}

\section{ main result}

\begin{thm}{(Main Theorem)}
If $(A,m,k)$ is an equicharacteristic Artinian local ring, and  $x_1,...,x_n \in m$, then $\nu( H_1(\under{x};A)) \geq n+\cid(A)-\cid(\frac{A}{(\under{x)}})$.
\end{thm}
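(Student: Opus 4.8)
The plan is to fix a Cohen presentation $A = T/I$ with $(T,\mathfrak{n})$ regular local (available since $A$ is equicharacteristic Artinian), lift each $x_i$ to $\tilde x_i \in \mathfrak{n}$, and set $J = I + (\tilde x_1,\dots,\tilde x_n)T$, so that $B := A/(\under{x})A = T/J$. Because $A$ and $B$ are Artinian and $T$ is regular local (hence Cohen--Macaulay), $\text{ht}(I) = \text{ht}(J) = \di(T)$, so that $\cid(A) - \cid(B) = \nu(I) - \nu(J)$, and the asserted inequality becomes $\nu(H_1(\under{x};A)) \geq n + \nu(I) - \nu(J)$. Note $n + \nu(I) - \nu(J)$ is exactly the number of $k$-independent relations among the natural generators $\tilde x_1,\dots,\tilde x_n$ together with a minimal generating set of $I$ for the ideal $J$; the goal is to produce that many independent generators of $H_1$.

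Next I would pin down $\nu(J)$ homologically. Applying $-\otimes_T k$ to $0 \to I \to J \to (\under{x})A \to 0$ (using $J/I \cong (\under{x})A$) gives the right-exact sequence
\[ I/\mathfrak{n}I \xrightarrow{\ \alpha\ } J/\mathfrak{n}J \to (\under{x})A/m(\under{x})A \to 0, \]
whence $\nu(J) = \nu((\under{x})A) + \nu(I) - \di_k(\Ker \alpha)$, so the target rewrites as $\nu(H_1(\under{x};A)) \geq \big(n - \nu((\under{x})A)\big) + \di_k(\Ker\alpha)$. The first summand is the \emph{linear} contribution: in $K_{\bul}(\under{x};A)$ write $Z_1 = \Ker d_1$ and $B_1 = \Ima d_2 \subseteq mA^n$; since $B_1 \subseteq mA^n$, the composite $Z_1 \into A^n \onto k^n$ annihilates both $B_1$ and $mH_1$, producing a surjection $H_1/mH_1 \onto \bar Z_1$ onto the image $\bar Z_1$ of $Z_1$ in $k^n$, and the syzygy sequence $0 \to Z_1 \to A^n \to (\under{x})A \to 0$ tensored with $k$ gives $\di_k \bar Z_1 = n - \nu((\under{x})A)$. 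Writing $H_1' = (Z_1 \cap mA^n)/B_1 \subseteq H_1$, its image $H_1'/mH_1$ is exactly the kernel of $H_1/mH_1 \onto \bar Z_1$, yielding the clean splitting $\nu(H_1) = \big(n - \nu((\under{x})A)\big) + \di_k(H_1'/mH_1)$.

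It therefore remains to prove the \emph{nonlinear} estimate $\di_k(H_1'/mH_1) \geq \di_k(\Ker\alpha)$. I would obtain this from the change-of-rings long exact sequence attached to the short exact sequence of Koszul complexes
\[ 0 \to K_{\bul}(\under{\tilde x};I) \to K_{\bul}(\under{\tilde x};T) \to K_{\bul}(\under{x};A) \to 0, \]
whose connecting homomorphism $\partial\colon H_1(\under{x};A) \to H_0(\under{\tilde x};I) = I/(\under{\tilde x})I$ has image $\big(I \cap (\under{\tilde x})T\big)/(\under{\tilde x})I$ and kernel the image of $H_1(\under{\tilde x};T)$. Restricting $\partial$ to the nonlinear part $H_1'$ and comparing with $\Ker\alpha = (I \cap \mathfrak{n}J)/\mathfrak{n}I$, using the modular law $I \cap \mathfrak{n}J = \mathfrak{n}I + \big(I \cap \mathfrak{n}(\under{\tilde x})T\big)$, should produce a surjection from $H_1'/mH_1$ onto a space of dimension $\di_k(\Ker\alpha)$, which would finish the argument.

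I expect this last step to be the main obstacle, for two reasons. First, $\nu$ is not additive along short exact sequences, so the entire difficulty is to certify that the generators detected by $\partial$ are genuinely independent of the linear ones and are not absorbed into $mH_1$; the splitting of the second paragraph is designed precisely to isolate this, but checking that the natural assignment $f = h + \sum_j g_j \tilde x_j \mapsto [(\bar g_1,\dots,\bar g_n)]$ is a well-defined injection $\Ker\alpha \into H_1'/mH_1$ requires controlling the ambiguity in such a decomposition modulo $mZ_1 + B_1$. Second, to identify $\Ker\partial$ correctly one must understand $H_1(\under{\tilde x};T)$ over the regular ring $T$; here I would invoke Remark 8 to make a linear change of the $\tilde x_i$ so that a maximal subset is a regular sequence in $T$, forcing the contribution of $H_1(\under{\tilde x};T)$ to be itself purely linear and hence not to interfere with the nonlinear count. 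Assembling these facts gives $\nu(H_1(\under{x};A)) \geq n + \nu(I) - \nu(J) = n + \cid(A) - \cid\big(\tfrac{A}{(\under{x})}\big)$, as desired.
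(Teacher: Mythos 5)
Your reductions in the first two paragraphs are all correct, and they follow a genuinely different route from the paper's: the identities $\cid(A)-\cid(A/(\under{x})A)=\nu(I)-\nu(J)$, $\nu(J)=\nu((\under{x})A)+\nu(I)-\di_k(\Ker\alpha)$, and the splitting $\nu(H_1)=\bigl(n-\nu((\under{x})A)\bigr)+\di_k\bigl((H_1'+mH_1)/mH_1\bigr)$ all check out (for the last one, note $B_1\subseteq Z_1\cap mA^n$ because the entries of $d_2$ are $\pm x_j\in m$, and the image of $H_1'$ in $H_1/mH_1$ is exactly the kernel of the surjection onto $\bar Z_1$). But the proof as written has a genuine gap at its decisive point: the nonlinear estimate $\di_k\bigl((H_1'+mH_1)/mH_1\bigr)\geq\di_k(\Ker\alpha)$ is only asserted (``should produce a surjection''), and everything else is bookkeeping, so this \emph{is} the theorem. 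Moreover, both repair strategies you flag point in unproductive directions. The map $\Ker\alpha\into H_1'/mH_1$ sending $f=h+\sum_j g_j\tilde x_j$ to $[(\bar g_1,\dots,\bar g_n)]$ suffers exactly the ill-definedness you worry about --- the decomposition is wildly non-unique and you would have to control the ambiguity modulo $mZ_1+B_1$, which is as hard as the theorem --- and injectivity is not what you need anyway. Likewise, identifying $\Ker\partial$ and analyzing $H_1(\under{\tilde x};T)$ via a linear change of the $\tilde x_i$ is irrelevant: a lower bound requires only a surjection \emph{out of} $H_1'$, for which the kernel of $\partial$ plays no role.

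The gap closes cleanly inside your own setup, with the arrow pointed the other way. Compose $\partial$ with the projection $I/(\under{\tilde x})I\onto I/\fk{n}I$ to get $\psi\colon H_1(\under{x};A)\to I/\fk{n}I$. This is $T$-linear and $\fk{n}$ kills $I/\fk{n}I$, so $\psi(mH_1)=0$, and $\psi$ is well defined with no choices to control (the snake-map ambiguity lies in $(\under{\tilde x})I\subseteq\fk{n}I$). For a class in $H_1'$, represented by $z\in Z_1\cap mA^n$, choose coordinate lifts $g_i\in\fk{n}$; then $\psi([z])=[\sum\tilde x_ig_i]$ lies in $\bigl(\fk{n}I+(I\cap\fk{n}(\under{\tilde x})T)\bigr)/\fk{n}I=\Ker\alpha$, by your modular-law identity. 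Conversely every $f\in I\cap\fk{n}(\under{\tilde x})T$ can be written $f=\sum\tilde x_ig_i$ with $g_i\in\fk{n}$, and then $(\bar g_1,\dots,\bar g_n)\in Z_1\cap mA^n$ satisfies $\psi([(\bar g_1,\dots,\bar g_n)])=[f]$; since such classes span $\Ker\alpha$ modulo $\fk{n}I$, the induced map $(H_1'+mH_1)/mH_1\onto\Ker\alpha$ is surjective, which is the estimate you needed. With this patch your argument is complete and genuinely different from the paper's: the paper interposes a complete intersection $S=T/(\tilde F_1,\dots,\tilde F_m)$ constructed by prime avoidance and lifting (its Lemmas 18 and 19), identifies $H_1(\under{x};A)\cong\Tor_1^S(S/(\under{x})S,A)$, and reads the inequality off the minimal $S$-free resolution via $n_1=n+\cid(A)$ and $\nu(W)=\cid(\bar A)$; your route trades the intermediate ring and prime avoidance for the Koszul connecting homomorphism, at the cost of the linear/nonlinear splitting and the surjectivity verification above.
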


Fix a coefficient field $k \subset A$. Suppose $y_1,...,y_m$ is a minimal set of generators of the maximal ideal of $\frac{A}{(\under{x})A}$. We may write $A/(\under{x})=\frac{k[[Y_1,...,Y_m]]}{I}$. Find lifts $\tilde{y_1},...,\tilde{y_m} \in A$ of $y_1,..,y_m$. Let $(T,\fk{n})$ denote $k[[X_1,...,X_n, Y_1,..,Y_m]]$. Let $\theta :K[[Y_1,...,Y_m]] \onto \bar{A}=\frac{A}{(\under{x})A},$ such that $ Y_i \mapsto y_i $ and $\tld{\theta} : T \onto A,  X_j \mapsto x_j, Y_i \mapsto \tilde{y_i}$ be the lifts of $\theta$ corresponding to the lift from $y_i$ to $\tilde{y_i}$. 

Let $\fk{a}$ denote $\ker(\tilde{\theta})$ and $I$ denote $\ker(\theta)$. Then we have a map $\pi: k[[X_1,...,X_n,Y_1,...,Y_m]] \to k[[Y_1,...,Y_m]] $, where $X_i \mapsto 0, Y_i \mapsto y_i$. Therefore $\pi(\fk{a}) \subset I$. Let $\phi: \fk{a} \to I$ denote the restriction of $\pi$ to $\fk{a}$.We claim $\phi$ is surjective because for any $F \in I $, if we abuse notation and write $F$ as its image in the obvious injective map from $k[[\under{Y}]]$ to $T=k[[\under{X},\under{Y}]]$, we shall have $\theta(F)=0$ $\Rightarrow  \tilde{\theta}(F) \in (x_1,...,x_n)A $ $\Rightarrow \tilde{\theta}(F)=\sum_{i=1}^{n} x_i a_i$ for certain $a_i \in A$. Lift the $ a_i $ to some $G_i \in T$, then $\tilde{F_j}=F_j -\sum X_i G_i \in \fk{a}$.
Therefore, we have the following commutative diagram:
\[
 \xymatrix{
 0 \ar[r] &\fk{a} \ar[r]  \ar@{->>}[d]^{\phi} &k[[X_1,...,X_n,Y_1,...,Y_m]]  \ar@{->>}[d]^{\pi} \ar@{->>}[r]^-{\tld{\theta}}   &A \ar@{->>}[d] \ar[r] &0 \\
0 \ar[r] &I \ar[r]               		 &k[[ Y_1,...,Y_m]]    \ar@{->>}[r]^-\theta 		& A/(\under{x})A   \ar[r] &0 }
\]
 Note that the rows are exact.

Before we prove the theorem, we shall prove several lemmas.
 We want to find polynomials $F_1,...,F_m$ such that they form a part of a minimal set of generators of $I=\ker(\theta)$ and they are also a system of parameters in $k[[Y_1,...,Y_m]]$. 
By a standard prime avoidance argument, we have the following lemma.
\begin{lem}
There exist elements $F_1,...,F_m$ such that they form a part of a minimal set of generators of $I$ and they are also a system of parameters in $k[[Y_1,...,Y_m]]$. 
\end{lem}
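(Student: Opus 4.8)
The plan is to realize the two requirements as separate avoidance conditions and then to satisfy them simultaneously by an inductive choice. Write $R:=k[[Y_1,\dots,Y_m]]$, a regular local ring of dimension $m$ with maximal ideal $\fk{m}_R$. Since $R/I\cong \bar A=A/(\under{x})A$ is Artinian, $I$ is $\fk{m}_R$-primary, so $\sqrt I=\fk{m}_R$ and $\text{ht}(I)=m$. By Nakayama's lemma, elements of $I$ are part of a minimal generating set precisely when their images in $I/\fk{m}_R I$ are $k$-linearly independent; and $F_1,\dots,F_m$ form a system of parameters precisely when $\text{ht}(F_1,\dots,F_m)=m$. To make the first condition transparent I would fix once and for all a $k$-subspace $V\subseteq I$ mapping isomorphically onto $I/\fk{m}_R I$, so that $\di_k V=\nu(I)=:N\ge \text{ht}(I)=m$, the subspace $V$ generates $I$ as an ideal, and any $k$-independent elements chosen inside $V$ are automatically part of a minimal generating set of $I$.

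Next I would build $F_1,\dots,F_m\in V$ one at a time. Suppose $F_1,\dots,F_{i-1}\in V$ have been found, $k$-linearly independent, with $\text{ht}(F_1,\dots,F_{i-1})=i-1$, for some $1\le i\le m$. Let $P_1,\dots,P_s$ be the minimal primes of $(F_1,\dots,F_{i-1})$ with $\di(R/P_l)=m-(i-1)$. Each such $P_l$ satisfies $\di(R/P_l)\ge 1$, hence $P_l\neq\fk{m}_R$; since $\fk{m}_R$ is maximal, $\fk{m}_R=\sqrt I\not\subseteq P_l$, whence $I\not\subseteq P_l$, and because $V$ generates $I$ this forces $V\not\subseteq P_l$, so $V\cap P_l$ is a proper $k$-subspace of $V$. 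Set $W:=\langle F_1,\dots,F_{i-1}\rangle_k$, also a proper subspace of $V$ since $i-1<m\le N$. If I can choose $F_i\in V\setminus\big(W\cup\bigcup_{l}(V\cap P_l)\big)$, then $F_i$ is independent of $F_1,\dots,F_{i-1}$ (it lies outside $W$) and avoids every top-dimensional minimal prime, so by Krull's principal ideal theorem $\text{ht}(F_1,\dots,F_i)=i$; iterating up to $i=m$ produces the desired $F_1,\dots,F_m$.

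The crux is therefore the existence of such an $F_i$, i.e.\ the statement that the vector space $V$ is not the union of the finitely many proper $k$-subspaces $W,\,V\cap P_1,\dots,V\cap P_s$. When the residue field $k$ is infinite this is automatic, since a vector space over an infinite field is never a finite union of proper subspaces. To remove the hypothesis on $k$ I would reduce to the infinite-residue-field case by the faithfully flat base change $A\rightsquigarrow A\otimes_k k(t)$ (equivalently, enlarge $k$ at the outset, exactly as in the passage from Conjecture 10 to Conjecture 9): this preserves $\cid$, the number of generators of the Koszul homology, and all the height and generator data of $I$, while making the residue field infinite. I expect this last point to be the only real obstacle: the system-of-parameters requirement is a prime-avoidance condition whereas being part of a minimal generating set is a linear-independence condition, and it is precisely their simultaneous fulfillment—avoiding the primes and a subspace at once—that forces one either to pass to an infinite residue field or to invoke the corresponding strengthened prime-avoidance lemma.
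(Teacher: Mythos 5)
Your argument is correct, and it shares the paper's inductive skeleton—choose $F_i$ avoiding the minimal primes of $(F_1,\dots,F_{i-1})$ to grow the height, while preserving linear independence modulo $\fk{m}_R I$ to stay inside a minimal generating set—but the two proofs implement the avoidance genuinely differently, and the difference is exactly the finite-field issue you flag at the end. The paper picks $g_{i+1}\in I$ outside $\bigl((g_1,\dots,g_i)+(Y_1,\dots,Y_m)I\bigr)\cup\bigl(\cup_j p_j\bigr)$ and invokes the standard prime avoidance lemma in the form that tolerates up to two non-prime ideals in the union: $I$ cannot lie in $(g_1,\dots,g_i)+\fk{m}_R I$ (Nakayama would give $I=(g_1,\dots,g_i)$, contradicting $\text{ht}(I)=m>i$), nor in any $p_j$ (these have height $\leq i<m$ while $\sqrt{I}=\fk{m}_R$); this works over an arbitrary residue field, so no base change is ever needed. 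Your version replaces that ideal by its trace on the fixed complement $V\cong I/\fk{m}_R I$—indeed $W=V\cap\bigl((F_1,\dots,F_{i-1})+\fk{m}_R I\bigr)$, so your two avoidance conditions are literally the restrictions to $V$ of the paper's—but covering arguments by proper subspaces genuinely require $k$ infinite, which forces you into the base change $A\to A[t]_{mA[t]}$. That reduction is legitimate ($\nu$ and Koszul homology are stable under this flat local map, and stability of $\cid$ is precisely the content of the cited paper [A] of Avramov), but note what it buys and costs: strictly speaking you do not prove the stated lemma for the given, possibly finite, $k$; you instead reduce the surrounding main theorem to the infinite-residue-field case, whereas the paper's one-non-prime-ideal avoidance yields the lemma unconditionally and keeps the argument self-contained. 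The remaining details of your write-up check out: $I$ is $\fk{m}_R$-primary of height $m$; every minimal prime of $(F_1,\dots,F_{i-1})$ has height exactly $i-1$ (at most $i-1$ by Krull, at least $i-1$ since each dominates the height of the ideal), so your restriction to top-dimensional minimal primes is in fact vacuous; and avoiding them all gives $\text{ht}(F_1,\dots,F_i)=i$, hence a system of parameters at $i=m$.
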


\begin{proof}
     Suppose $g_1,..,g_i$ are minimal generators of $I$ that are part of a system of parameters ($i=0$ is allowed). Let $p_1,...,p_k$ be the minimal primes of $(g_1,...,g_i).$ Then $$I \nsubseteq \bigl( (g_1,..,g_n)+(Y_1,...Y_m)I \bigr) \cup (\cup_j p_j),$$ unless $i=m$. Pick $g_{i+1} \in  I- \bigl( (g_1,..,g_n)+(Y_1,...Y_m)I )\cup (\cup_j p_j)\bigr)$, then $g_1,...,g_{i+1}$ are part of a system of parameters of $K[[Y_1,...,Y_m]] $ and part of a minimal set of generators of $I$.
\end{proof}

Since $\phi: \fk{a} \onto I$, the $F_j$ have lifts in $T$ that are in $\fk{a}$. Say $\tilde{F_j}=F_j - \sum X_i G_{ji}$, where $G_{ji} \in T$. Let $S=T/(\tilde{F_1},...,\tilde{F_m})$. We want to show that $\wtld{F_j}$ is also part of a minimal set of generators for $\fk{a}$.

\begin{lem}
If $F_1,...,F_m$ is part of a minimal set of generators for $I=\ker(\theta)$, then their lifts $\wtld{F_j} \in \fk{a} $ are also part of a minimal set of generators for $\fk{a}=\ker(\tld{\theta})$. 
\end{lem}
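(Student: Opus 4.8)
The plan is to transfer the minimality of $F_1,\ldots,F_m$ in $I$ across the surjection $\phi:\fk{a}\onto I$ that has already been constructed, by passing to the $k$-vector spaces that govern minimal generation via Nakayama. Recall that $\wtld{F_j}\in\fk{a}$ is part of a minimal generating set of $\fk{a}$ precisely when the residues $\overline{\wtld{F_j}}$ are $k$-linearly independent in $\fk{a}/\fk{n}\fk{a}$, and likewise $F_1,\ldots,F_m$ being part of a minimal generating set of $I$ means the residues $\overline{F_j}$ are $k$-linearly independent in $I/(Y_1,\ldots,Y_m)I$. Thus the entire lemma reduces to showing that $k$-linear independence of the $\overline{F_j}$ forces $k$-linear independence of the $\overline{\wtld{F_j}}$.

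First I would check that $\phi$ descends to a well-defined $k$-linear map $\bar\phi:\fk{a}/\fk{n}\fk{a}\to I/(Y_1,\ldots,Y_m)I$. Since $\phi$ is the restriction of the $k$-algebra surjection $\pi$, and $\pi(\fk{n})=(Y_1,\ldots,Y_m)$ while $\pi(\fk{a})=I$, every product $ab$ with $a\in\fk{n}$ and $b\in\fk{a}$ satisfies $\phi(ab)=\pi(a)\pi(b)\in(Y_1,\ldots,Y_m)I$, giving the containment $\phi(\fk{n}\fk{a})\subseteq (Y_1,\ldots,Y_m)I$ needed for $\bar\phi$ to make sense. Next I would compute the effect of $\phi$ on the chosen lifts: because $\pi$ kills each $X_i$ and fixes each $Y_i$, and $\wtld{F_j}=F_j-\sum_i X_iG_{ji}$ with $F_j\in k[[Y_1,\ldots,Y_m]]$, we get $\phi(\wtld{F_j})=\pi(F_j)-\sum_i\pi(X_i)\pi(G_{ji})=F_j$, and hence $\bar\phi(\overline{\wtld{F_j}})=\overline{F_j}$.

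With these two facts in hand the conclusion is immediate. A $k$-linear map carries linearly dependent families to linearly dependent families, so if the $\overline{\wtld{F_j}}$ were dependent in $\fk{a}/\fk{n}\fk{a}$, then their images $\overline{F_j}=\bar\phi(\overline{\wtld{F_j}})$ would be dependent in $I/(Y_1,\ldots,Y_m)I$, contradicting that $F_1,\ldots,F_m$ is part of a minimal generating set of $I$. Therefore the $\overline{\wtld{F_j}}$ are $k$-linearly independent, and so $\wtld{F_1},\ldots,\wtld{F_m}$ extend to a minimal generating set of $\fk{a}$.

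The only genuinely substantive point is the containment $\phi(\fk{n}\fk{a})\subseteq (Y_1,\ldots,Y_m)I$: one must confirm that the extra directions $X_1,\ldots,X_n$ inside $\fk{n}$, which are absent downstairs, do not obstruct the descent of $\phi$ to the cotangent-type quotients. This is exactly where the explicit form $\pi(X_i)=0$ together with the surjectivity of $\phi$ do the work. Everything else is formal linear algebra over the residue field $k$, so I expect no real obstacle beyond bookkeeping the two quotients and the induced map $\bar\phi$ correctly.
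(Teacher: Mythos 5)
Your proposal is correct and is essentially the paper's own argument: both reduce via Nakayama to $k$-linear independence in $\fk{a}/\fk{n}\fk{a}$ and transfer it from $I/(Y_1,\ldots,Y_m)I$ using the map induced by $\pi$ (which kills the $X_i$, so that $\wtld{F_j}\mapsto F_j$ and $\fk{n}\fk{a}\mapsto(Y_1,\ldots,Y_m)I$). The only cosmetic difference is that you package this as an explicitly descended map $\bar\phi$ on the quotients, whereas the paper phrases it as the composite $\fk{a}/\fk{n}\fk{a}\into T/\fk{n}\fk{a}\onto T/(\fk{n}\fk{a}+(\under{X}))\cong k[[\under{Y}]]/(\under{Y})I$.
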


\begin{proof}
Since $\wtld{F_j}$ are already in $\fk{a}$, it suffices to prove that their images are linearly independent over $k$ in $\fk{a}/\fk{n}\fk{a}$.
   
  Consider $\fk{a}/\fk{na} \into T/\fk{na} \onto T/(\fk{na}+(\under{X})) \cong k[[\under{Y}]] / \Ima( \fk{na}) \cong k[[\under{Y}]]/ (\under{Y})I$, where the last isomorphism holds because $\Ima(\fk{n})=(\under{Y})$ and $\Ima(\fk{a})= I$. The images $\gamma_j$ of the $\tilde{F_j}$ in $\fk{a/na}$ maps to elements $\bar{\gamma_j}$ in $I/(\under{Y})I$, which are the same as the images of the $F_j$. The $\bar{\gamma_j}$ are $k$-linearly independent, so the $\gamma_j$ are $k$-linearly independent.
\end{proof}

Recall that $S=T/(\tilde{F_1},...,\tilde{F_m})$. $A$ is a quotient of $S$. Consider the minimal free resolution of $A$ over $S$:

\[
  \cdots \to S^{n_2} \to S^{n_1} \to S \onto A
\]
%\begin{lem}
%In a local ring, a permutation of a regular sequence is still a regular sequence. 
%\end{lem}
%A proof is provided as proposition 1.1.6 in [BH].

\begin{lem}
If $x_1,...,x_n $ is a regular sequence in a ring $R$, then $K_{\bul}(x_1,...,x_n;R)$ gives a free resolution of $R/(x_1,...,x_n)R$. Furthermore, if $(R,m)$ is local and $(x_1,...,x_n) \in m$, then this is a minimal free resolution.
\end{lem}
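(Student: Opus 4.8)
The plan is to establish exactness by induction on $n$, and then verify minimality separately by inspecting the differentials. For the base case $n=1$, the complex $K_\bul(x_1;R)$ is $0\to R\xrarr{x_1}R\to 0$; since $x_1$ is a nonzerodivisor the map is injective, so $H_1(x_1;R)=0$ and $H_0(x_1;R)=R/(x_1)R$, which is exactly the assertion.

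For the inductive step I would use the recursive definition of the Koszul complex, which realizes $K_\bul(x_1,\ldots,x_n;R)$ as the mapping cone of multiplication by $x_n$ on $K_\bul(x_1,\ldots,x_{n-1};R)$. This yields a long exact homology sequence in which, by the fact recalled above that multiplication by an element induces multiplication by that same element on homology, every self-map is multiplication by $x_n$:
\[ \cdots \to H_i(x_1,\ldots,x_{n-1};R)\xrarr{x_n}H_i(x_1,\ldots,x_{n-1};R)\to H_i(\under{x};R)\to H_{i-1}(x_1,\ldots,x_{n-1};R)\to\cdots \]
By the induction hypothesis the homology of $K_\bul(x_1,\ldots,x_{n-1};R)$ vanishes in positive degrees and equals $R/(x_1,\ldots,x_{n-1})R$ in degree $0$. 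For $i\geq 2$ both neighboring terms vanish, forcing $H_i(\under{x};R)=0$.

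The only delicate degree is $i=1$, where the sequence collapses to
\[ 0\to H_1(\under{x};R)\to R/(x_1,\ldots,x_{n-1})R\xrarr{x_n}R/(x_1,\ldots,x_{n-1})R. \]
Here the regular-sequence hypothesis enters precisely: $x_n$ is a nonzerodivisor on $R/(x_1,\ldots,x_{n-1})R$, so this map is injective and $H_1(\under{x};R)=0$, while reading off the cokernel at the tail of the sequence gives $H_0(\under{x};R)=R/(\under{x})R$. Hence $K_\bul(\under{x};R)$ is a free resolution of $R/(\under{x})R$. For minimality in the local case I would invoke the standard criterion that a complex of free modules over $(R,m)$ is a minimal resolution exactly when each differential sends $F_i$ into $mF_{i-1}$, i.e. all entries of the differential matrices lie in $m$. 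From the exterior-algebra description of the Koszul differential recalled above, $d$ carries a basis element $u_{j_1}\wedge\cdots\wedge u_{j_i}$ to an alternating sum with coefficients $\pm x_{j_t}$; since each $x_{j_t}\in m$, every entry lies in $m$ and the resolution is minimal.

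The argument carries no serious obstacle; the single point demanding care is the correct identification of the self-maps in the mapping-cone long exact sequence as multiplication by $x_n$, together with the application of the regular-sequence hypothesis at exactly the $i=1$ step, where the nonzerodivisor property of $x_n$ on $R/(x_1,\ldots,x_{n-1})R$ is what annihilates $H_1$.
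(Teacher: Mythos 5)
Your proof is correct. The paper itself gives no argument for this lemma, deferring instead to Section 1.6 of [BH]; your mapping-cone induction --- with the regular-sequence hypothesis entering exactly at the $i=1$ stage of the long exact sequence, and minimality read off from the fact that the Koszul differential has matrix entries $\pm x_{j_t} \in m$ --- is essentially the standard proof given in that reference.
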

The proof is given in section 1.6 in [BH].

\begin{proof}[Proof of the Main Result]
 The $x_i$ form a regular sequence on $S$ (since $x_i,\tilde{F_j}$ form a regular sequence on $T$.) Thus $H_1^A(\under{x};A) \cong H_1^{S}(K_{\bul } (\under{x};S) \otimes A) \cong \Tor_1^S( \frac{S}{(\under{x})S};A)$.

Tensor the minimal resolution of $A$ with $S/(\under{x}S)$. This yields the following complex:
\begin{equation}
       \cdots \to \Bigl(\frac{S}{(\under{x})}\Bigr)^{n_2} \to \Bigl(\frac{S}{(\under{x})}\Bigr)^{n_1} \to \frac{S}{(\under{x})}\to \frac{A}{(\under{x})}.  \label{eq1}
\end{equation}

The matrices of the maps of this resolution have entries that are contained in $(y_1,...,y_m)$, since the original resolution of $A$ as an $S-$module was a minimal one over $S$, so that the maps of the original resolution were contained in $(x_1,...,x_n,y_1,..., y_m)=m_S$.
%It is obvious that $n_1$ is the number of generators of ker$( \frac{S}{(\under{x})}\to \frac{A}{(\under{x})} )$.

 Now $$A=T/\fk{a}=T/(\wtld{F_1},...,\wtld{F_m}, G_1,...,G_{t})=S/J$$
 where
 $$\wtld{F_1},...,\wtld{F_m}, G_1,...,G_{t}$$
  is a minimal generating set of $\fk{a}$ and $J=(\overline{G_1},...,\overline{G_{t}})$,i.e, ideal generated by images of $G_k$ in $S$. Then $t$ is the minimal number of generators of $J=\ker(S \onto A)$. Thus $t=n_1$. 

Now $\fk{a}$ is minimally generated by $m+n_1$ elements. Hence $m+n_1 = \nu(\fk{a}) = \cid(A)+ \di(T) =\cid(A)+(m+n)$. thus $n_1 = \cid(A)+ n$.

Let $\bar{S}$ denote $\frac{S}{(\under{x})}$, $\bar{A}$ denote $\frac{A}{(\under{x})}$. $W$ denote $\ker(\bar{S} \onto \bar{A})$, $Z$ denote $\ker(\bar{S}^{n_1} \to \bar{S})$, $B$ denote $\Ima(\bar{S}^{n_2}) \in m_{\bar{}S} \cdot \bar{S}^{n_1}.$ Then by definition, $\nu(W)= \cid(\bar{A})$.
We have the following short exact sequences:
\begin{equation}
	0 \to Z \to \bar{S}^{n_1} \to W \to 0  \text{			} \label{eq2}
\end{equation}
 and 
 \[
 0 \to B \to Z \to H \to 0.
 \]
 Where $H= Z/B$, which is the first homology module of $(\ref{eq1})$, i.e. $\Tor_1^{S}(\bar{S};A)=H_1^A(\under{x};A)$. 
 Now tensor (\ref{eq2}) with  $K$, we get the exact sequence:
 \[
    Z\otimes K \to K^{n_1} \to  W\otimes K \to 0.  
 \]
 But $B \in m \cdot \bar{S}^{n_1}$, so $\frac{Z}{B} \otimes K \cong Z\otimes K$. Hence,
 \[
H\otimes K \to K^{n_1} \to  W\otimes K \to 0.  
\]
 Therefore, $n_1=\nu(K^{n_1}) \geq \nu(H) +\nu(W)$, and $\nu(H)\geq n_1-\nu(W)=n+\cid(A)-\cid(\bar{A})$.
 
 \end{proof}
 
%Notice that of the $n_1$ generators of $S^{n_1} $ as a direct sum, there are $(k-m) $ elements that are sent to $\wtld{F_{m+1}},...,\wtld{F_k}$, thus the map from these elements are nonzero after tensoring with $S/(\under{x})$, however, the other elements are contained in $(\under{x}S)$, thus are $0$ after tensoring with $S/(\under{x})$. Thus $\Tor_1$ is generated by $n_1-(k-m)$ elements, notice $k-m=\cid(A/ (\under{x}))$, thus it is generated by at least $n+\cid(A) -\cid (A/(\under{x}))$ elements.
\begin{cor}
When $\cid(A) \geq \cid(\frac{A}{(x_1,...,x_n)A})$, we have $\nu(H_1(x_1,..,x_n;A)) \geq n$. 
\end{cor}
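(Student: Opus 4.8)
The plan is to obtain this as an immediate consequence of the Main Theorem, since the corollary is precisely the special case in which passing to the quotient $A/(\under{x})A$ does not raise the complete intersection defect. First I would record the inequality established in the Main Theorem, namely
\[
\nu\bigl(H_1(\under{x};A)\bigr) \;\geq\; n + \cid(A) - \cid\Bigl(\tfrac{A}{(\under{x})A}\Bigr).
\]
This already does all the substantive work; the corollary merely interprets the sign of the defect term.

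Next I would observe that the hypothesis $\cid(A) \geq \cid\bigl(A/(\under{x})A\bigr)$ is exactly the assertion that the correction term $\cid(A) - \cid\bigl(A/(\under{x})A\bigr)$ is nonnegative. Substituting this bound into the displayed inequality then gives
\[
\nu\bigl(H_1(\under{x};A)\bigr) \;\geq\; n + \cid(A) - \cid\Bigl(\tfrac{A}{(\under{x})A}\Bigr) \;\geq\; n,
\]
which is the desired conclusion.

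The only point requiring any care is that the hypotheses of the Main Theorem are in force: we need $(A,m,k)$ to be an equicharacteristic Artinian local ring and $x_1,\dots,x_n \in m$. Both are inherited directly from the standing assumptions of this section, so no additional argument is needed. I therefore do not expect a genuine obstacle here; the entire content of the statement is carried by the Main Theorem, and the corollary simply isolates the hypothesis under which the defect contribution cannot decrease the guaranteed number of generators below $n$.
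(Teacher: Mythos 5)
Your proposal is correct and matches the paper exactly: the corollary is stated there without proof precisely because it is the immediate specialization of the Main Theorem's inequality $\nu(H_1(\under{x};A)) \geq n + \cid(A) - \cid(A/(\under{x})A)$ when the defect term is nonnegative. Your check that the Main Theorem's hypotheses (equicharacteristic Artinian local, $x_i \in m$) carry over is the only point of substance, and it is handled correctly.
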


\section{acknowledgements}
I would like to thank the University of Michigan and the Department of Mathematics. I would also like to thank my advisor Melvin Hochster for his advice and guidance, without which these results would not have been obtained.


\begin{thebibliography}{BMS}

\bibitem[A]{A}
Avramov, Luchezar L.
 Homology of Local Flat Extensions and Complete Intersection Defects. Mathematische Annalen 228.1 (1977): 27-37. 

\bibitem[BH]{BH}
 Winfried Bruns, and J\"urgen Herzog,
Cohen-Macaulay Rings. Cambridge: Cambridge UP, 1993. 


\bibitem[S]{S}
Jean-Pierre Serre,
Alg\` ebre Locale $\cdot$ Multiplicit\' es, Seconde \'edition, Lecture Notes in Math No.11, Springer-Verlag, Berlin, 1965.


\end{thebibliography}
\end{document}